\newtheorem{theorem}{Th\'eor\`eme}[section]
\newtheorem{prop}[theorem]{Proposition}
\newtheorem{cor}[theorem]{Corollaire}
\newtheorem{lemma}[theorem]{Lemme}
\theoremstyle{definition}
\theoremstyle{remark}
\newtheorem{remark}[theorem]{Remarque}
\numberwithin{equation}{section}
\newcommand{\NN}{{\mathbb N}}
\newcommand{\RR}{{\mathbb R}}
\newcommand{\out}[1]{\ }
\let\cal=\mathcal
\renewcommand{\phi}{\varphi}
\begin{document}

\title [Propri\'et\'e de convergence
et r\'egularit\'e du noyau de Green fin]{Propri\'et\'e de convergence
de certaines familles de fonctions finement harmoniques et r\'egularit\'e du noyau de Green d'un domaine fin}

\author{Abderrahim Aslimani}

\author{Mohamed El Kadiri}
\address{Universit\'e Mohammed V
\\D\'epartemnt de Math\'ematiques
\\Facult\'e des Sciences
\\B.P. 1014, Rabat
\\Morocco}
\email{slimonier.math.@gmail.com}
\email{elkadiri@fsr.ac.ma}

\maketitle

{\bf R\'esum\'e.}
Nous d\'emontrons une propri\'et\'e de convergence de certaines
familles de fonctions finement harmoniques dans un domaine fin $U$ de
$\RR^n$ ($n\ge 2$) et nous l'appliquons pour \'etablir
une certaine r\'egularit\'e du noyau de Green fin de $U$.

\footnote{Key words: Fonction finement harmonique, Fonction finement surharmonique,
Potentiel fin, Noyau de Green fin, Repr\'esentation int\'egrale,
Topologie naturelle, Base compacte.}
 \footnote{2000 Mathematics Subject Classification: 31D05, 31C35, 31C40.}

\section{Introduction.}
On se place dans un domaine de Green  $\Omega$ de $\RR^n$, c'est-\`a-dire un domaine
r\'egulier quelconque de $\RR^n$ si $n\ge 3$, ou un domaine de compl\'ementaire non
polaire si $n=2$. Rappelons
que le noyau (ou fonction) de Green de $\Omega$ est une fonction sym\'etrique $G$ d\'efinie sur $\Omega\times \Omega$ \`a valeurs
dans $]0,+\infty[$ ayant les propri\'et\'es suivantes:

1. $G$ est s.c.i. sur $\Omega\times\Omega$ et continue
en dehors de la diagonale de $\Omega\times \Omega$.

2. Pour tout $y\in \Omega$, la fonction $G(.,y)$ est un potentiel,
harmonique dans $\Omega\setminus \{y\}$.

Toute autre fonction $G'$ sur $\Omega\times \Omega$ \`a valeurs
dans $]0,+\infty[$ poss\'edant les propri\'et\'es 1. et 2. est de la
forme $G'(.,y)=\varphi(y)G(.,y)$ pour tout
$y\in \Omega$, o\`u $\varphi$ est une fonction finie continue et $>0$ sur $\Omega$.
De plus, comme $G$ est sym\'etrique, la fonction $G'$ est sym\'etrique si et seulement si
$\varphi$ est constante.
Si $\Omega=\RR^n$, $n\ge 3$, alors $G$ est donn\'e par
$G(x,y)=\frac{1}{||x-y||^{n-2}}$ \`a la multiplication pr\`es par une
constante $>0$.
La notion de fonction ou noyau de Green a \'et\'e \'etendue au cadre g\'en\'eral des
espaces harmoniques de Brelot v\'erifiant l'hypoth\`ese d'unicit\'e par R.-M. Herv\'e
dans \cite{He}.

Soit $U$ un domaine fin de $\Omega$, c'est-\`a-dire un domaine
au sens de la topologie fine sur $\Omega$. Rappelons que la topologie fine,
d\'efinie par Cartan en 1940, est la moins fine des topologies
sur $\Omega$ qui rendent continues les fonctions
surharmoniques dans $\Omega$ (pour plus
de d\'etails sur cette topologie on renvoie \`a
\cite[Chapter 1]{F1} et \cite{DO}). Pour tout $y\in U$, on note
$G_U(.,y)$  la fonction d\'efinie sur $U\setminus \{y\}$ par
$G_U(.,y)=G(.,y)-\widehat R_{G(.,y)}^{\complement U}$. Cette fonction
est finement surharmonique $\ge 0$ dans $U\setminus \{y\}$,
et le point $y$ est polaire, donc elle se prolonge par continuit\'e fine
\`a $U$ en une fonction finement surharmonique sur $U$, not\'ee encore $G_U(.,y)$.
La fonction $(x,y)\mapsto G_U(x,y)$ d\'efinie sur $U\times U$ est appel\'ee un noyau de
Green fin de $U$. D'apr\`es \cite[Th\'eor\`eme, p. 203]{F0}, pour tout $y\in U$,
$G_U(.,y)$ est un potentiel fin dans $U$ et tout potentiel fin
dans $U$ finement harmonique dans $U\setminus \{y\}$ est de la forme
$\alpha(y)G_U(.,y)$, o\`u $\alpha(y)$ est une constante $>0$ ne d\'ependant que de
$U$ et de $y$.

On peut alors se demander si le noyau de Green  fin $G_U$ de $U$ est
r\'egulier dans le sens o\`u  $G_U$ finement s.c.i. dans $U\times U$ et finement continue en dehors de la diagonale de $U\times U$.
Dans le cas classique d'un domaine euclidien de $\RR^n$, la propri\'et\'e
2. d'un noyau de Green de $\Omega$ est une cons\'equence du principe de
Harnack pour les fonctions harmoniques positives dans $\Omega$. Or le principe de
Harnack n'est pas satisfait par les fonctions finement harmoniques dans un domaine fin.
Toutefois ces fonctions poss\`edent une propri\'et\'e de convergence voisine.
C'est cette propri\'et\'e qui va nous permettre de r\'epondre \`a la question
soulev\'ee ci-dessus. Plus pr\'ecisemment, nous \'etablissons une propri\'et\'e de
convergence pour les suites et les familles de fonctions finement harmoniques uniform\'ement
finement localement born\'ees, et gr\^ace \`a cette propri\'et\'e et \`a l'aide d'une comparaison entre
la topologie naturelle  et la topologie fine de $U$, nous montrons que
le noyau $G_U$ est r\'egulier.

Les r\'esultats de ce travail sont valables dans le cadre
g\'en\'eral d'un $\cal P$-espace harmonique de la th\'eorie axiomatique de Brelot
\`a base d\'enombrable qui satisfait l'axiome (D), l'hypoth\`ese d'unicit\'e et dans lequel la topologie
fine est moins fine que la topologie fine adjointe, pour de tels
espaces on renvoie \`a  \cite{He}. On s'est plac\'e dans un domaine de Green
de l'espace $\RR^n$ pour des raisons de simplicit\'e seulement.

{\bf Notations et d\'efinitions}:
Dans tout ce travail on se place dans un domaine r\'egulier $\Omega$ de $\RR^n$, de compl\'ementaire non polaire
si $n=2$, et on consid\`ere un domaine fin $U$ de $\Omega$, c'est \`a-dire au sens de la topologie
fine de $\Omega$. Le noyau de Green de $\Omega$ est not\'e tout simplement
$G$. Nous utilisons le mot fin (finement)
pour distinguer les notions relatives
\`a la topologie fine de celles relatives \`a la topologie
euclidienne de $U$, c'est-\`a-dire la topologie induite
sur $U$ par la topologie usuelle de $\RR^n$. Pour toute partie $A$ de $U$, on note $\overline A$
l'adh\'erence de $A$ dans $\Omega$ en topologie euclidienne
et $\tilde A$ l'adh\'erence de $A$ dans $\Omega$ en topologie fine.
On note ${\rm f}$-$\lim$ et ${\rm f}$-$\liminf$ la limite et la limite inf
en topologie fine. On
note aussi  ${\cal S}(U)$ le c\^one convexe des fonctions finement
surharmoniques $\ge 0$ dans  $U$ au sens de \cite{F1}.
Si $f$ est une fonction sur $U$ \`a valeurs dans $\overline \RR$, on note
$\widehat f$ la r\'egularis\'ee finement s.c.i de $f$, c'est-\`a-dire la plus grande minorante
finement s.c.i de $f$. Le noyau de Green fin de
$U$ est not\'e $G_U$. Pour plus de d\'etails sur ce noyau nous
renovoyons \`a \cite{F0}.

\section{ Propri\'et\'e de convergence de familles et de suites de fonctions finement harmoniques}

D'apr\`es \cite[Th\'eor\`eme 3.1, p. 107]{EK1}, il existe une r\'esolvante
absolument continue $(V_\lambda)$ de noyaux
bor\'eliens sur $U$ dont le c\^one des fonctions
excessives  finies $(V_\lambda)$-p.p. est le c\^one $\cal S(U)$. Il en r\'esulte d'apr\`es \cite[Theorem 4.4.6, p. 136]{BBC}
que $\cal S(U)$ est un $H$-c\^one standard
de fonctions. On le munit alors de la topologie naturelle \cite[Section 4.5, p. 141]{BBC}.
Rappelons que cette topologie
est induite sur $\cal S(U)$ par celle d'un espace vectoriel localement convexe $E$ dans
lequel $\cal S(U)$ est un c\^one convexe saillant
et que, pour cette topologie, si un filtre $\cal F$ sur $\cal S(U)$ est convergent, alors
on a $\lim_{\cal F}=\sup_{M\in \cal F}{\widehat {\inf}}_{u\in M}u$ (\cite[Theorem 4.5.2]{BBC}).
De plus, pour cette topologie $\cal S(U)$ est localement compact et admet une base
compacte \cite[Corollaire 2, p. 110]{EK1}.

\begin{lemma}\label{lemma2.1}
Soit $x_0\in U$. Alors il existe un voisinage fin
de $x_0$ compact $V\subset U$ tel que la r\'estriction
de toute fonction $u\in \cal S(U)$ \`a $V$
est s.c.i (en topologie euclidienne).
\end{lemma}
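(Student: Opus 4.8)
\emph{Overview and a first candidate for $V$.} The plan is to produce, out of the thinness of $\complement U$ at $x_0$, a first candidate for $V$, and then to shrink it by a (localised) quasi-continuity argument, after having replaced the cone $\mathcal S(U)$ by a suitable countable subfamily. Since $U$ is finely open and $x_0\in U$, the set $\complement U$ is thin at $x_0$; by the classical characterisation of thinness there are a ball $B(x_0,r_0)\subset\Omega$, a superharmonic function $v\ge 0$ on $B(x_0,r_0)$ with $v(x_0)<+\infty$, and a $\delta>0$, such that $v>v(x_0)+\delta$ on $\complement U\cap B(x_0,r_0)$. As $v$ is lower semicontinuous, $\{v\le v(x_0)+\delta\}$ is Euclidean closed and, by the above, it meets $B(x_0,r_0)$ only inside $U$; as $v$ is finely continuous with $v(x_0)<v(x_0)+\delta$, the set $\{v<v(x_0)+\delta\}\cap B(x_0,r_0)$ is a finely open fine neighbourhood of $x_0$. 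Hence, for $r<r_0$,
\[
V_0:=\{v\le v(x_0)+\delta\}\cap\overline{B(x_0,r)}
\]
is a Euclidean compact fine neighbourhood of $x_0$ contained in $U$; it remains to shrink it so that every $u\in\mathcal S(U)$ has Euclidean lower semicontinuous restriction to the smaller set.

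\emph{Reduction to a countable family.} Since $\mathcal S(U)$ is a standard $H$-cone of functions, there is a countable $\vee$-stable family $\{p_k\}\subset\mathcal S(U)$ such that every $u\in\mathcal S(U)$ is the supremum of $\{p_k:p_k\le u\}$, this supremum being attained pointwise (the pointwise supremum of an up-directed subfamily of $\mathcal S(U)$ being again finely superharmonic and $\ge 0$). A pointwise supremum of Euclidean lower semicontinuous functions being Euclidean lower semicontinuous, it suffices to find a compact fine neighbourhood $V\subset V_0$ of $x_0$ on which every $p_k$ is Euclidean lower semicontinuous; in fact I will arrange each $p_k|_V$ to be Euclidean continuous with values in $[0,+\infty]$. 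Discarding those $p_k$ with $p_k(x_0)=+\infty$, which does not affect the supremum property, one may also assume $p_k(x_0)<+\infty$ for every $k$.

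\emph{The final cut.} Each $p_k$ is finely superharmonic $\ge 0$, hence finely continuous on $U$. The key step is a quasi-continuity statement for fine continuity, \emph{localised at} $x_0$: to each $p_k$ one attaches a set $Z_k\subset U\setminus\{x_0\}$, thin at $x_0$ and of arbitrarily small Wiener series at $x_0$, such that $p_k$ is Euclidean continuous on $U\setminus Z_k$. Choosing the $Z_k$ thin enough, $Z:=\bigcup_k Z_k$ is still thin at $x_0$ by the Wiener criterion, and every $p_k$ is Euclidean continuous on $U\setminus Z$. Applying the characterisation of thinness to $Z$ gives a superharmonic function $W\ge 0$, finite at $x_0$, and a $\rho>0$, with $W>W(x_0)+1$ on $Z\cap B(x_0,\rho)$. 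Taking $r<\min(r_0,\rho)$ and setting
\[
V:=\{v\le v(x_0)+\delta\}\cap\{W\le W(x_0)+1\}\cap\overline{B(x_0,r)},
\]
one checks as before that $V$ is a Euclidean compact fine neighbourhood of $x_0$ contained in $U$, and now $V\cap Z=\emptyset$, so $V\subset U\setminus Z$ and every $p_k|_V$ is Euclidean continuous. By the reduction step, the restriction to $V$ of every $u\in\mathcal S(U)$ is then Euclidean lower semicontinuous, which is the assertion.

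\emph{Where the difficulty lies.} The whole weight of the argument is on the localised quasi-continuity statement of the last step. The usual theorem only produces an exceptional open set of small capacity, which has no reason to be thin at the prescribed point $x_0$, so one cannot cut it out by level sets of a superharmonic function and remain a fine neighbourhood of $x_0$. Upgrading ``small capacity'' to ``thin at $x_0$, with arbitrarily small Wiener series there'' — which is exactly what legitimises the summation over $k$ — requires a quantitative estimate of the r\'eduites of the level sets of $p_k$ near $x_0$, in the spirit of Fuglede's proof that finely continuous functions are continuous on suitable compact fine neighbourhoods; this is the point I expect to be the main obstacle.
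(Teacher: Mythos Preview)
Your strategy --- reduce to a countable generating family via the standard $H$-cone structure of $\mathcal S(U)$, find a Euclidean-compact fine neighbourhood of $x_0$ on which each member of that family is Euclidean continuous, and conclude by taking suprema --- is exactly the paper's approach. The only difference is that the paper does not attempt to reprove the step you flag as difficult: it simply invokes \cite[Lemma, p.~114]{F5}, Fuglede's result that for any countable family of finely continuous functions and any point $x_0$ there is a Euclidean-compact fine neighbourhood of $x_0$ on which all of them are Euclidean continuous. Your ``localised quasi-continuity'' sketch is an outline of the proof of that very lemma, and your diagnosis of the obstacle --- upgrading the exceptional set from ``small capacity'' to ``thin at $x_0$ with controllable Wiener series'' so that the countable union remains thin there --- is accurate; this is indeed what Fuglede does. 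Once you replace your third step by a citation of \cite{F5}, the proof is complete and coincides with the paper's; the detour through discarding the $p_k$ with $p_k(x_0)=+\infty$ (whose harmlessness you assert but do not justify) then becomes unnecessary as well, since Fuglede's lemma handles $[0,+\infty]$-valued finely continuous functions directly.
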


{\it D\'emonstration.} Comme $\cal S(U)$ est un $H$-c\^one standard
de fonctions, il existe d'apr\`es \cite[Definition, p. 104 et Theorem 4.4.6]{BBC} une suite
$(s_n)$ de fonctions de $\cal S(U)$ telle que toute
fonction $s\in \cal S(U)$ est l'enveloppe sup\'erieure d'une sous-suite de $(s_n)$.
Soit $x_0\in U$, il existe d'apr\`es \cite[Lemma, p. 114]{F5} un voisinage fin $V$
de $x_0$,
compact en topologie euclidienne,  tel que la r\'estriction de
toute fonction $s_n$ \`a $V$ est continue en topologie euclidienne.
On en d\'eduit donc que la r\'estriction de toute fonction $s\in \cal S(U)$ \`a $V$
est s.c.i. en topologie euclidienne.

\begin{theorem}\label{thm2.2}
Soit $(h_i)_{i\in I}$ une famille  de fonctions finement harmoniques $\ge 0$
uniform\'ement  finement localement born\'ee dans $U$,
et soit $\cal F$ un filtre sur $I$. Supposons que la famille $(h_i)$ converge selon $\cal F$
vers une fonction $h\in \cal S(U)$ en topologie
de $\cal S(U)$.
Alors   $h$ est finement harmonique dans $U$ et la famille $(h_i)$ converge, selon $\cal F$,
uniform\'ement finement localement
dans $U$ vers $h$.
\end{theorem}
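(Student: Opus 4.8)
The plan is to proceed in two stages: first to establish that the limit $h$ is finely harmonic, then to upgrade the convergence in the natural topology to uniform fine-local convergence. For the first stage, I would invoke Lemma~\ref{lemma2.1}: fix an arbitrary point $x_0\in U$ and choose a compact fine neighbourhood $V\subset U$ of $x_0$ on which every $u\in\cal S(U)$ is euclidean-s.c.i. Since convergence of the filter $\cal F$ in the natural topology of $\cal S(U)$ gives $h=\lim_{\cal F}h_i=\sup_{M\in\cal F}\widehat{\inf}_{i\in M}h_i$, and since the $h_i$ are uniformly finely locally bounded (so the relevant infima and suprema are finite on a suitable fine neighbourhood of $x_0$), I can combine this formula with the euclidean lower semicontinuity on $V$ to control $h$ pointwise. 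The key point is that $h$, being in $\cal S(U)$, is finely superharmonic; to get fine harmonicity I must show it is also finely \emph{sub}harmonic, i.e. that $-h$ is finely hyperharmonic, equivalently that $h$ satisfies the mean-value inequality from below against finely harmonic measures on small fine neighbourhoods. Here I would use that each $h_i$ is finely harmonic, hence satisfies the exact mean-value property $h_i(x)=\int h_i\,d\mu_x^{W}$ for fine neighbourhoods $W$ with $\widetilde{W}\subset U$ compact, and pass to the limit along $\cal F$ using the natural-topology convergence together with a domination/uniform-integrability argument supplied by the uniform fine-local boundedness.

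For the second stage, once $h$ is known to be finely harmonic, I would argue locally. Fix $x_0$ and a compact fine neighbourhood $V$ as above, shrunk so that all $h_i$ and $h$ are bounded by a common constant on $V$ (uniform fine-local boundedness). On such a set one has, for each $i$, a representation of $h_i-h$ (or of $h_i$ and $h$ separately) in terms of fine swept-out measures, and the differences $\widehat{\inf}_{i\in M}h_i\uparrow h$ as $M$ runs over $\cal F$. The strategy is to show that $\sup_{x\in V'}|h_i(x)-h(x)|\to 0$ along $\cal F$ for a possibly smaller fine neighbourhood $V'$ of $x_0$. I would write $h_i - h = (h_i - \widehat{\inf}_{j\in M}h_j) - (h - \widehat{\inf}_{j\in M}h_j)$ for $i\in M$; the second term tends to $0$ in the natural topology and, being a decreasing-to-zero net of nonnegative finely superharmonic functions, its convergence can be made uniform on $V'$ by a Dini-type argument using euclidean lower semicontinuity on the compact set $V$ (Lemma~\ref{lemma2.1}) — a decreasing net of l.s.c. functions on a compact set converging pointwise to a continuous (here, finely continuous and l.s.c., hence one controls it) limit converges uniformly. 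The first term $h_i-\widehat{\inf}_{j\in M}h_j\ge 0$ is handled symmetrically, comparing with $\sup_{j\in M}h_j$ and using that the sup also converges to $h$ in the natural topology.

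The main obstacle I anticipate is the passage to the limit in the mean-value equality for the $h_i$: the natural topology on $\cal S(U)$ is not a priori a topology of pointwise convergence, so I cannot directly conclude $\int h_i\,d\mu_x^W\to\int h\,d\mu_x^W$. Bridging this requires exploiting the structure of the natural topology on an $H$-cone — specifically the formula $\lim_{\cal F}=\sup_M\widehat{\inf}$ and the fact that $\cal S(U)$ has a compact base, which forces the convergence to respect integration against the measures $\mu_x^W$ (these define continuous linear functionals, or are dominated by such, on the relevant compact base). The uniform fine-local boundedness is exactly what guarantees the $h_i$ stay in a fixed compact subset of $\cal S(U)$ near $x_0$, making these functionals well-behaved. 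A second, more technical obstacle is the Dini argument: fine continuity of $h$ plus euclidean lower semicontinuity on $V$ must be leveraged carefully, since $V$ is compact euclidean but the functions are only finely continuous; the point is that the \emph{differences} are euclidean-l.s.c. on $V$ and decrease to an euclidean-u.s.c. limit (namely $0$, or a finely continuous function that one shows is euclidean-continuous on a smaller set), which yields uniformity.
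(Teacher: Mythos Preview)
Your overall framework is right --- reduce to a uniform bound $0\le h_i\le c$, invoke Lemma~\ref{lemma2.1} to get euclidean lower semicontinuity on a compact fine neighbourhood $V$, and run a Dini argument on the increasing net $\widehat{\inf}_{i\in M}h_i\uparrow h$ to obtain $h-h_i\le\epsilon$ uniformly on $V$ for $i$ in some $M_0\in\cal F$. That half matches the paper exactly.

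The gap is in the other half. Your proposed ``symmetric'' treatment via $\sup_{j\in M}h_j$ does not work as stated: the supremum of a family of finely superharmonic functions is not finely superharmonic, so $\sup_{j\in M}h_j$ does not live in $\cal S(U)$, the natural-topology limit formula $\lim_{\cal F}=\sup_M\widehat{\inf}_M$ gives no direct grip on it, and there is no reason it should converge to $h$ in that topology. Your Stage~1 (fine harmonicity of $h$ via passing to the limit in the mean-value identity) is also, as you yourself note, obstructed by the fact that natural-topology convergence is not pointwise convergence.

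The paper bypasses both difficulties with a single observation you are missing: since $0\le h_i\le c$, the functions $c-h_i$ are \emph{also} nonnegative finely harmonic, hence lie in $\cal S(U)$, and from $c=(c-h_i)+h_i$ one reads off $\lim_{\cal F}(c-h_i)=c-h$ in $\cal S(U)$. This immediately gives (i) $c-h\in\cal S(U)$, so $h$ is finely harmonic and in fact euclidean-\emph{continuous} on $V$ (being both l.s.c.\ and u.s.c.\ there), which is exactly what the Dini argument needs; and (ii) the reverse inequality $h_i-h\le\epsilon$ by applying the identical Dini argument to the family $(c-h_i)$ on a possibly smaller compact fine neighbourhood $W\subset V$. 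No mean-value limits, no suprema --- just run your own argument twice.
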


{\it D\'emonstration.} Quitte \`a se placer finement localement
et ajouter une constante on peut supposer que $0\le h_i\le c$
dans $U$ pour
une  certaine constante $c>0$ et pour tout
$i\in I$. On a $\lim_{i,\cal F}h_i=\sup_{M\in \cal F}{\widehat {\inf_{i\in M}}}h_i=h\in \cal S(U)$
d'apr\`es \cite[Theorem 4.5.2]{BBC}.
D'autre part on a $c=c-h_i+h_i$, et $\lim_{i,\cal F}h_i=h$ dans $\cal S(U)$,
donc $\lim_{i,\cal F}(c-h_i)=c-h$ dans $\cal S(U)$.
Soit $x_0\in U$,
alors d'apr\`es le lemme pr\'ec\'edent,
il existe un voisinage fin $V$ de $x_0$, compact en topologie euclidienne, sur lequel les fonctions
$\widehat{\inf_{i\in M}}h_i$, $M\in \cal F$, sont s.c.i en topologie euclidienne
et $h$ est continue en topologie euclidienne.
Soit $\epsilon >0$, comme $V$ est compact
on peut trouver $M_0\in \cal F$ tel que
$h-\widehat{\inf_{i\in M}}h_i\le \epsilon$ dans $V$ pour tout $M\in \cal F$ contenant $M_0$.
On en d\'eduit que pour tout $M\in \cal F$ contenant $M_0$, on a
$h-h_i\le \epsilon$ dans $V$ pour tout $i\in M$. En appliquant le m\^eme proc\'ed\'e
aux fonctions $c-h_i$, $i\in I$,
 on peut trouver un voisinage fin $W$ de $x_0$, compact en topologie initiale et contenu dans $V$,
et un ensemble $M_1\in \cal F$, tel que $h_i-h<\epsilon$ dans $W$ pour tout
$M\supset M_1$ et tout $i\in M$. On a alors
$|h-h_i|<\epsilon$ dans $W$ pour tout $M\in \cal F$ contenant $ M_0\cup M_1$
et tout $i\in M$. Ainsi la famille $h_i$ converge
uniform\'ement selon $\cal F$ vers $h$ dans $W$.

\begin{cor}\label{cor2.3}
Soit $I$ un ensemble ordonn\'e et r\'eticul\'e
\`a droite et soit $(h_i)_{i\in I}$ une famille filtrante croissante de fonctions finement harmoniques dans
$U$. Si $(h_i)$ est uniform\'ement finement localement born\'ee dans $U$, alors $h=\sup_ih_i$ est
finement harmonique, et
$(h_i)$ est finement localement uniform\'ement convergente vers $h$
selon le filtre des sections de $I$.
\end{cor}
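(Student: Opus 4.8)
The plan is to obtain the corollary as a direct application of Theorem \ref{thm2.2} to the filter $\mathcal{F}$ of sections of $I$. Since the conclusion is finely local and, fixing an arbitrary index $i_0\in I$, we may replace $(h_i)$ by the cofinal family $(h_i-h_{i_0})_{i\ge i_0}$ (whose terms are again finely harmonic, now $\ge 0$, and which has the same filter of sections up to cofinality, the sup being $h-h_{i_0}$, and adding $h_{i_0}$ back at the end), we may assume $h_i\ge 0$ for all $i$, so that each $h_i\in\mathcal{S}(U)$. The one thing that requires work is then to check that $(h_i)$ converges, along $\mathcal{F}$ and in the natural topology of $\mathcal{S}(U)$, to some element of $\mathcal{S}(U)$, and to identify that element as $h=\sup_i h_i$.

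First I would observe that $h=\sup_i h_i$ belongs to $\mathcal{S}(U)$. Indeed, by the description recalled at the beginning of Section 2, $\mathcal{S}(U)$ is exactly the cone of functions that are excessive for $(V_\lambda)$ and finite $(V_\lambda)$-almost everywhere; an increasing limit of excessive functions is excessive, and the hypothesis that $(h_i)$ is uniformly finely locally bounded forces $h$ to be finite at every point of $U$; being moreover $\ge 0$, we get $h\in\mathcal{S}(U)$. (Equivalently, one may quote that an increasing limit of finely hyperharmonic functions is finely hyperharmonic, and that a finite finely hyperharmonic function is finely superharmonic.) In particular $h$ is finely l.s.c., so $\widehat h=h$.

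\textbf{The main step} is to show that $(h_i)\to h$ in the natural topology of $\mathcal{S}(U)$ along $\mathcal{F}$. The family $(h_i)$ lies in the order interval $[0,h]=\{s\in\mathcal{S}(U):s\le h\}$, which is compact for the natural topology (a standard property of standard $H$-cones with a compact base); hence the image of $\mathcal{F}$ under $i\mapsto h_i$ has cluster points. Let $\mathcal{G}$ be any filter finer than $\mathcal{F}$ whose image converges to some $h'\in\mathcal{S}(U)$. By \cite[Theorem 4.5.2]{BBC}, $h'=\sup_{M\in\mathcal{G}}{\widehat{\inf}}_{i\in M}h_i$. On the one hand $h_i\le h$ for every $i$ gives ${\widehat{\inf}}_{i\in M}h_i\le\widehat h=h$, whence $h'\le h$. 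On the other hand, for each $j\in I$ the section $M_j=\{i\in I:i\ge j\}$ lies in $\mathcal{F}\subseteq\mathcal{G}$ and, the family being increasing, $\inf_{i\in M_j}h_i=h_j$, which is finely l.s.c., so ${\widehat{\inf}}_{i\in M_j}h_i=h_j$ and therefore $h'\ge h_j$; letting $j$ run over $I$ gives $h'\ge\sup_j h_j=h$. Thus $h'=h$: every convergent refinement of $\mathcal{F}$ has limit $h$, and since the image of $\mathcal{F}$ lies in the compact set $[0,h]$ this forces $\lim_{\mathcal{F}}h_i=h$ in $\mathcal{S}(U)$.

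It then remains to apply Theorem \ref{thm2.2} to the family $(h_i)$ and the filter $\mathcal{F}$: it yields that $h$ is finely harmonic in $U$ and that $(h_i)$ converges to $h$ uniformly finely locally in $U$ along the filter of sections of $I$, which is exactly the assertion (adding back $h_{i_0}$ if the reduction of the first paragraph was used). I expect the genuinely delicate point to be the structural input of the main step, namely the compactness of the order interval $[0,h]$ in $\mathcal{S}(U)$ endowed with the natural topology; if one wishes to avoid it, the alternative is to invoke directly that in a standard $H$-cone every increasing family that is majorized in the cone converges, in the natural topology, to its supremum.
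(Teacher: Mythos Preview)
Your proof is correct and follows the same route as the paper: reduce to Theorem~\ref{thm2.2} after checking that $(h_i)$ converges to $h=\sup_i h_i$ in the natural topology of $\mathcal{S}(U)$ along the filter of sections. The paper's own proof simply asserts this convergence in one sentence and invokes Theorem~\ref{thm2.2}; your compactness-and-refinement argument (and the reduction to $h_i\ge 0$) carefully justifies what the paper leaves implicit.
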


{\it D\'emonstration.} Il est clair que $h=\sup_{i\in I}h_i$ est
finement harmonique dans $U$. Soit $\cal F$ 
le filtre des sections de
$I$. Alors $(h_i)$ est convergente selon $\cal F$ vers la fonction $h=\sup_ih_i\in \cal S(U)$.
Le corollaire r\'esulte aussit\^ot du th\'eor\`eme pr\'ec\'edent.

\begin{cor}\label{cor2.4}  Soit $U$ un ouvert fin de
$\Omega$ et soit
$(h_n)$ une suite uniform\'ement finement localement born\'ee
(i.e. au sens de la topologie fine) de fonctions finement harmoniques dans $U$.
Alors on peut en extraire  une sous-suite $(h_{n_k})$
qui converge
uniform\'ement finement localement vers
une fonction finement harmonique $h$ dans $U$.
\end{cor}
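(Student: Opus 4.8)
\emph{Esquisse de démonstration.} Le plan est de se placer au voisinage fin de chaque point, où l'hypothèse fournira un compact du cône naturel, puis de recoller par un procédé diagonal. Notons d'abord que, $\mathcal S(U)$ étant un $H$-cône standard de fonctions, sa topologie naturelle est métrisable (\cite{BBC}); il en va de même du cône $\mathcal S(V)$ associé à tout ouvert fin $V$ de $\Omega$, construit comme $\mathcal S(U)$ au début de cette section.

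Soit $x_0\in U$. Comme $(h_n)$ est uniformément finement localement bornée, il existe un voisinage fin $V\subset U$ de $x_0$ et une constante $c>0$ tels que $0\le h_n\le c$ sur $V$ pour tout $n$. Les fonctions constantes étant finement harmoniques, $c\in\mathcal S(V)$, et l'intervalle d'ordre $\{u\in\mathcal S(V):u\le c\}$ est compact pour la topologie naturelle de $\mathcal S(V)$ (\cite{BBC}). Les restrictions $h_n|_V$ appartenant toutes à ce compact métrisable, donc séquentiellement compact, toute sous-suite de $(h_n|_V)$ admet une sous-suite convergente dans $\mathcal S(V)$; d'après le théorème \ref{thm2.2}, une telle sous-suite converge uniformément finement localement sur $V$ vers une fonction finement harmonique.

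On recouvre alors $U$ par de tels voisinages fins $(V_x)_{x\in U}$. La topologie fine possédant la propriété de quasi-Lindelöf (\cite{F1}), on en extrait une sous-famille dénombrable $(V_j)_{j\ge 1}$ telle que $e:=U\setminus W$ soit polaire, où $W:=\bigcup_j V_j$ est un ouvert fin finement dense dans $U$. Un procédé diagonal appuyé sur l'étape précédente fournit une sous-suite $(h_{n_k})$ telle que $(h_{n_k}|_{V_j})$ converge dans $\mathcal S(V_j)$ pour chaque $j$; le théorème \ref{thm2.2} montre alors que $(h_{n_k})$ converge uniformément finement localement sur chaque $V_j$ vers une fonction finement harmonique, et ces fonctions, étant les limites ponctuelles de $(h_{n_k})$, se recollent en une fonction $h$ finement harmonique sur $W$. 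Comme, au voisinage de tout point de $e$, $h$ est majorée par la constante correspondante et que $e$ est polaire, $h$ se prolonge en une fonction finement harmonique $\bar h$ sur $U$ (\cite{F1}).

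L'étape délicate est de vérifier que $(h_{n_k})$ converge encore uniformément finement localement vers $\bar h$ près de l'ensemble exceptionnel $e$, l'extraction n'ayant porté que sur $W$. Fixons $x\in e$, de voisinage fin $V_x$ avec $0\le h_n\le c_x$ sur $V_x$: la suite $(h_{n_k}|_{V_x})$ reste dans le compact $\{u\in\mathcal S(V_x):u\le c_x\}$, donc chacune de ses valeurs d'adhérence est finement harmonique sur $V_x$ (théorème \ref{thm2.2}) et coïncide avec $\bar h$ sur l'ensemble finement dense $V_x\cap W$, donc avec $\bar h|_{V_x}$ par continuité fine; ainsi $(h_{n_k}|_{V_x})$ converge vers $\bar h|_{V_x}$ dans $\mathcal S(V_x)$, et le théorème \ref{thm2.2} montre que $(h_{n_k})$ converge uniformément finement localement vers $\bar h$ sur $V_x$. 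Joints à $W$, ces voisinages recouvrent $U$, d'où le résultat. En dehors de ce recollement à travers l'ensemble polaire, la démonstration repose essentiellement sur deux faits classiques de la théorie des $H$-cônes standard, la métrisabilité de la topologie naturelle et la compacité des intervalles d'ordre.
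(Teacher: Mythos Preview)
Your argument is correct and, in fact, more complete than the paper's own proof. The paper proceeds very briefly: it says that, ``quitte \`a se placer finement localement et ajouter une constante'', one may assume $0\le h_n\le c$ on $U$, then extracts a subsequence converging in the natural topology of $\mathcal S(U)$ (using that $\mathcal S(U)$ has a compact base) and applies Theorem~\ref{thm2.2}. Read literally, this reduction replaces $U$ by a fine neighbourhood of a given point, so the extracted subsequence a priori depends on that point; the paper does not spell out how one obtains a \emph{single} subsequence working on all of $U$. Your quasi-Lindel\"of covering, diagonal extraction, and extension across the residual polar set $e$ (with the neat cluster-point argument in $\mathcal S(V_x)$ to recover uniform convergence near $e$) supply precisely this missing globalisation. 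In exchange, your proof is longer and invokes the metrizability of the natural topology and the compactness of order intervals in each $\mathcal S(V)$, whereas the paper stays within $\mathcal S(U)$ once the local reduction is made.

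One small slip: when you write ``il existe un voisinage fin $V\subset U$ de $x_0$ et une constante $c>0$ tels que $0\le h_n\le c$ sur $V$'', you are implicitly shifting by a constant, since the $h_n$ are only assumed bounded, not nonnegative. The paper makes this shift explicit (``ajouter une constante''); you should say the same, e.g.\ replace $h_n$ by $h_n+M$ on $V$ for a suitable $M\ge 0$, so that the restrictions genuinely lie in $\mathcal S(V)$.
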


{\it D\'emonstration.} Quitte \`a se placer finement localement et ajouter une constante
aux fonctions $h_n$, on peut supposer qu'il existe
une constante $c>0$ telle que $0\le h_n\le c$ pour tout entier $n$.
On peut alors extraire de $(h_n)$ une sous-suite $(h_{n_k})$ qui converge
au sens de la tolopologie naturelle de $\cal S(U)$ vers une fonction finement surharmonique
$h\ge 0$. Il suffit maintenant d'appliquer le th\'eor\`eme \ref{thm2.2}
\`a la famille $(h_{n_k})_k$ et le filtre des voisinages de $+\infty$ dans $\NN$.

\section{R\'egularit\'e du noyau de Green fin}

Rappelons que d'apr\`es \cite[Corollaire 2, p. 110]{EK1}, le c\^one $\cal S(U)$
muni de la topologie naturelle admet une base compacte.
Soit $B$ une base compacte de $\cal S(U)$
et soit $\Phi$ une forme lin\'eaire continue
positive sur $\cal S(U)$ d\'efinissant $B$, i.e $B=\{s\in \cal S(U): \Phi(s)=1\}$.
Pour tout $y\in U$, posons $P_y=\frac{G_U(.,y)}{\Phi(G_U(.,y))}$.
L'application $\varphi: U\longrightarrow B$ d\'efinie par
$\varphi(y)=P_y$ est injective. Nous identifions $U$ avec son image par $\varphi$. La topologie induite
sur $U$ par celle de $B$ sera appel\'ee la topolgie naturelle de $U$.
Cette topologie est ind\'ependante de la base $B$. En effet
si $B_1$ et $B_2$ sont deux bases compactes de $\cal S(U)$, alors $B_1$ et
$B_2$ sont hom\'eomorphes, donc les topologies induites sur $U$ par celles
de $B_1$ et $B_2$ sont identiques.

Nous commen\c cons d'abord par comparer la topolgie naturelle de $U$ avec la topologie
fine. La proposition suivante et ses deux corollaires ont \'et\'e d\'emontr\'es
dans \cite{EKF1}.
\begin{prop}\label{prop3.1} {\rm (}\cite[Corollary 3.15]{EKF1}{\rm)}
La fonction $U\ni y\mapsto P_y\in {\cal S}(U)$ est finement continue sur $U$.
\end{prop}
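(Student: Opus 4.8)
Le plan repose sur la compacit\'e de la base $B$ et sur la description des limites pour la topologie naturelle rappel\'ee ci-dessus. Fixons $y_0\in U$ et un filtre $\mathcal F_0$ sur $U$ convergeant finement vers $y_0$; il s'agit de montrer que $P_y\to P_{y_0}$ selon $\mathcal F_0$ dans $\mathcal S(U)$. Comme $B$ est compacte et $P_y\in B$ pour tout $y$, il suffit de prouver que, pour tout ultrafiltre $\mathcal G$ sur $U$ raffinant $\mathcal F_0$, la limite $s:=\lim_{\mathcal G}P_y\in B$ \'egale $P_{y_0}$; d'apr\`es \cite[Theorem 4.5.2]{BBC}, $s=\sup_{M\in\mathcal G}\widehat{\inf_{y\in M}}P_y$. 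Le but est de montrer que $s$ est un potentiel fin non nul dans $U$, finement harmonique dans $U\setminus\{y_0\}$: \cite[Th\'eor\`eme, p.~203]{F0} donnera alors une constante $\alpha>0$ telle que $s=\alpha\,G_U(\cdot,y_0)$, et la normalisation $\Phi(s)=1$ imposera $\alpha=1/\Phi(G_U(\cdot,y_0))$, c'est-\`a-dire $s=P_{y_0}$.

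On admet provisoirement l'ingr\'edient-cl\'e suivant: la ``masse de Green'' ne s'\'evanouit pas en $y_0$, autrement dit $\liminf_{\mathcal G}\Phi(G_U(\cdot,y))>0$; il existe donc $M_0\in\mathcal G$ et $c<+\infty$ tels que $\alpha(y):=1/\Phi(G_U(\cdot,y))\le c$ pour tout $y\in M_0$, d'o\`u $P_y=\alpha(y)\,G_U(\cdot,y)\le c\,G_U(\cdot,y)$ sur $U$ pour $y\in M_0$. D'une part, $s$ est un potentiel fin: pour $M\subset M_0$ et $y'\in M$ on a $\widehat{\inf_{y\in M}}P_y\le c\,\widehat{\inf_{y\in M}}G_U(\cdot,y)\le c\,G_U(\cdot,y')$, donc $s(x)\le c\,\liminf_{\mathcal G}G_U(x,y)$ pour tout $x\in U$; or, par sym\'etrie de $G_U$, la fonction $y\mapsto G_U(x,y)=G_U(y,x)$ est finement surharmonique, donc finement continue, d'o\`u $\liminf_{\mathcal G}G_U(x,y)=G_U(x,y_0)$; ainsi $s\le c\,G_U(\cdot,y_0)$, et $s$, finement surharmonique $\ge 0$ et major\'ee par le potentiel fin $c\,G_U(\cdot,y_0)$, est un potentiel fin de $U$. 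D'autre part, $s$ est finement harmonique dans $U\setminus\{y_0\}$: fixons $y_1\in U\setminus\{y_0\}$, choisissons un voisinage fin finement connexe $V_1\ni y_1$ et un voisinage fin $V_0\ni y_0$ dont les adh\'erences euclidiennes sont compactes, contenues dans $\Omega$ et disjointes; comme $V_0\in\mathcal G$, on peut supposer $M_0\subset V_0$, de sorte que chaque $P_y$ ($y\in M_0$) est finement harmonique dans $V_1$, et comme $G_U\le G$ avec $G$ born\'ee sur $\overline{V_1}\times\overline{V_0}$, les $P_y$ ($y\in M_0$) sont born\'ees sur $V_1$ par une m\^eme constante; la famille $(P_y)_{y\in M_0}$ est ainsi uniform\'ement finement localement born\'ee dans $V_1$ et, par restriction, converge vers $s$ dans $\mathcal S(V_1)$, de sorte que le th\'eor\`eme \ref{thm2.2} appliqu\'e dans $V_1$ montre que $s$ est finement harmonique dans $V_1$, donc dans tout $U\setminus\{y_0\}$. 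Enfin $s\neq 0$ car $\Phi(s)=1$.

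Reste l'ingr\'edient admis, $\liminf_{\mathcal G}\Phi(G_U(\cdot,y))>0$: c'est le point principal, et je l'\'etablirais encore via la sym\'etrie de $G_U$. D'apr\`es la repr\'esentation int\'egrale, la restriction de $\Phi$ au c\^one des potentiels fins de $U$ est de la forme $u\mapsto\int_U u\,d\mu$ pour une mesure $\mu\neq 0$ sur $U$; comme $G_U(\cdot,y)$ est un potentiel fin, on a donc, par sym\'etrie, $\Phi(G_U(\cdot,y))=\int_U G_U(x,y)\,d\mu(x)=\int_U G_U(y,x)\,d\mu(x)=(G_U\mu)(y)$, o\`u $G_U\mu$ est le potentiel fin de $\mu$ — fonction finie (puisque $\Phi$ est finie sur $\mathcal S(U)$), finement surharmonique $\ge 0$, donc finement continue et strictement positive sur $U$. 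Par cons\'equent $\Phi(G_U(\cdot,y))=(G_U\mu)(y)\to(G_U\mu)(y_0)>0$ selon $\mathcal G$, ce qui donne l'ingr\'edient admis (et m\^eme $\alpha(y)\to\alpha(y_0)$). Les deux paragraphes pr\'ec\'edents et \cite[Th\'eor\`eme, p.~203]{F0} donnent alors $s=\alpha\,G_U(\cdot,y_0)$, et $\Phi(s)=1$ fixe $\alpha=\alpha(y_0)$, soit $s=P_{y_0}$. Tout ultrafiltre raffinant $\mathcal F_0$ ayant $P_{y_0}$ pour limite, la compacit\'e de $B$ entra\^ine $P_y\to P_{y_0}$ selon $\mathcal F_0$, d'o\`u la fine continuit\'e de $y\mapsto P_y$ sur $U$.
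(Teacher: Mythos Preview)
Your argument is correct and reaches the same endpoint as the paper --- showing that any subsequential limit $s$ of the $P_y$'s is a nonzero fine potential, finely harmonic in $U\setminus\{y_0\}$, then invoking \cite[Th\'eor\`eme, p.~203]{F0} --- but the route is genuinely different.

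The paper never normalizes: it works with $G_U(\cdot,y)$ and exploits the decomposition $G(\cdot,y)|_U=G_U(\cdot,y)+\widehat R_{G(\cdot,y)}^{\complement U}|_U$. Harnack for the classical Green function $G$ gives $\lim_{\cal U}G(\cdot,y)|_U=G(\cdot,z)|_U$ in ${\cal S}(U)$, and the symmetry $\widehat R_{G(\cdot,y)}^{\complement U}(x)=\widehat R_{G(\cdot,x)}^{\complement U}(y)$ (finely harmonic in $y$) controls the second summand. From $G(\cdot,z)|_U=s+\lim_{\cal U}\widehat R_{G(\cdot,y)}^{\complement U}|_U$ with both pieces in ${\cal S}(U)$ and $G(\cdot,z)$ finely harmonic off $z$, one reads off that $s$ is simultaneously finely sub- and superharmonic in $U\setminus\{z\}$, without ever invoking Theorem~\ref{thm2.2}. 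By contrast, you use the symmetry of $G_U$ itself to get $s\le c\,G_U(\cdot,y_0)$, and you appeal to Theorem~\ref{thm2.2} (locally, on $V_1$) to obtain fine harmonicity of $s$ off $y_0$. What your approach buys is that it essentially proves Proposition~\ref{prop3.4} along the way (fine continuity of $y\mapsto\Phi(G_U(\cdot,y))$), whereas the paper derives Proposition~\ref{prop3.4} afterwards; what the paper's approach buys is that it avoids any structural hypothesis on $\Phi$.

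One point deserves more care: your ``d'apr\`es la repr\'esentation int\'egrale'' for $\Phi(p)=\int_U p\,d\mu$ on fine potentials is plausible in the standard $H$-c\^one framework of \cite{BBC}, but it is not stated in the paper and is not entirely trivial (a continuous linear form on ${\cal S}(U)$ can have a ``harmonic'' component; you are using that this component vanishes on fine potentials). Since the statement is independent of the choice of base $B$, a clean fix is simply to \emph{choose} $\Phi$ of the form $s\mapsto\int_U s\,d\mu$ for a suitable measure $\mu$ at the outset, or alternatively to replace this step by the argument of Proposition~\ref{prop3.4}, which yields the same conclusion $\Phi(G_U(\cdot,y))\to\Phi(G_U(\cdot,y_0))>0$ from the symmetry of $G_U$ and Theorem~\ref{thm2.2} without any representation of $\Phi$.
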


\begin{proof}

Soit $x_0,z\in U$, $x_0\neq z$, et $V$ un voisinage fin de $z$ tel que $x_0\notin {\overline V}$.
Par le principe de Harnack, on peut trouver un voisinage ouvert $W$ de $x_0$ tel que
$$(1-\epsilon)\inf_{y\in V}G(x_0,y)\le  \inf_{y\in V} G(x_0,y)\le (1+\epsilon)\inf_{y\in V}G(x_0,y)$$
pour tout $x\in W$. On en d\'eduit que
$$(1-\epsilon)\inf_{y\in V}G(x_0,y)\le  \widehat\inf_{y\in V} G(x_0,y)\le (1+\epsilon)\inf_{y\in V}G(x_0,y)$$
$x_0$ et $V$ \'etant arbitraire, on en d\'eduit que
${\rm f}$-$\lim{\widehat \inf}_{y\to z} \ G(.,y)= G(.,z)$ dans $U\setminus \{z\}$, donc partout.
Soit $\cal U$ un ultrafiltre sur $U$ plus fin que le filtre des
voisinages fins de $z$.
On a, dans ${\cal S}(U)$,
$$G(.,z)|U=\lim_{y,\cal U}G(.,y)|U=\lim_{y,\cal U}G_U(.,y)+\lim_{y,\cal U}\widehat R_{G(.,y)}^{\complement U}|U.$$
D'autre part, on a, pour tout $x\in U$,
$$(\lim_{y,\cal U}\widehat R_{G(.,y)}^{\complement U})(x)=(\sup_{M\in {\cal U}}\widehat \inf_{y\in M}\widehat R_{G(.,y)}^{\complement U})(x)
\le \lim_{y,\cal U}\widehat R_{G(.,y)}^{\complement U}(x)= \widehat R_{G(.,z)}^{\complement U}(x)$$
puisque la fonction $y\mapsto \widehat R_{G(.,y)}^{\complement U}(x)=
\widehat R_{G(.,x)}^{\complement U}(y)$ est finement continue
sur $U$. On en d\'eduit que $s=\lim_{y,\cal U} G_U(.,y)\ge G_U(.,z)>0$.
D'autre part on a aussi $s\le \lim_{y,\cal U}G(.,y)|U=G(.,z)|U$, donc $s\in \cal S(U)$.
On a \'egalement $G(.,y)-U=G_U(.,y)+\widehat R_{G(.,y)}^{\complement U}|U$,
d'o\`u par passage \`a la limite suivant $\cal U$:
$$G(.,z)|U=s+\lim{_\cal U}\widehat R_{G(.,y)}^{\complement U}|U\ge s+\widehat R_{G(.,y)}^{\complement U}|U,$$
ce qui prouve que $s$ est un potentiel fin finement harmonique dans
$U\setminus \{z\}$, donc de la forme $\alpha G_U(.,z)$ pour un certain $\alpha \in ]0,1]$
d'apr\`es \cite[Theorem, p. 203]{F0}. Il en r\'esulte que
$$\lim_{y,\cal U}P_y=\lim_{y,\cal U}\frac{G_U(.,y)}{\Phi(G_U(.,y))}=\frac{s}{\Phi(s)}=P_z.$$
On en d\'eduit finalement que ${\rm f}$-$\lim_{y\to z}P_y=P_z.$ Ce qui prouve bien
la fonction $U\ni y\mapsto P_y$ est  est finement continue
et termine la preuve.
\end{proof}

\begin{cor}\label{cor3.2} {\rm (}\cite[Corollary 3.15]{EKF1}${\rm)}$
La topologie naturelle de
$U$ est moins fine que la topologie fine de $U$.
\end{cor}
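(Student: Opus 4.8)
The plan is to read this corollary off directly from Proposition~\ref{prop3.1}. Recall that, by construction, the natural topology of $U$ is the one transported to $U$ by the injection $\varphi\colon y\mapsto P_y$ from the (compact base $B$ of the) cone ${\cal S}(U)$ equipped with its natural topology; equivalently, it is the coarsest topology on $U$ making $\varphi$ continuous, and its open sets are exactly the sets $\varphi^{-1}(O)$ with $O$ open in $B$ for the natural topology of ${\cal S}(U)$.

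First I would fix such an open set $O\subset B$ together with a point $z\in\varphi^{-1}(O)$, so that $P_z\in O$ and $O$ is a neighbourhood of $P_z$ in $B$ for the natural topology of ${\cal S}(U)$. Proposition~\ref{prop3.1} asserts precisely that $\flim_{y\to z}P_y=P_z$ in ${\cal S}(U)$, i.e. that $\varphi$ is finely continuous at $z$; hence there is a fine neighbourhood $V$ of $z$ in $U$ with $\varphi(V)\subset O$, that is, $V\subset\varphi^{-1}(O)$. Thus $\varphi^{-1}(O)$ is a fine neighbourhood of each of its points, hence finely open in $U$.

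Since every subset of $U$ open for the natural topology is of this form, it follows that every natural-open subset of $U$ is finely open, which is exactly the assertion that the natural topology of $U$ is less fine than the fine topology of $U$. I do not expect any real obstacle here: the argument is just the unwinding of the definition of the natural topology of $U$ combined with Proposition~\ref{prop3.1}, the only point worth recording explicitly being the standard fact that a finely continuous map pulls back open sets to finely open sets, which has been verified above by hand.
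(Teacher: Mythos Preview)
Your argument is correct and is exactly the implicit derivation the paper intends: Corollary~\ref{cor3.2} is stated without separate proof, as an immediate consequence of Proposition~\ref{prop3.1} (the fine continuity of $y\mapsto P_y$) together with the definition of the natural topology on $U$ as the one transported from $B$ via $\varphi$. There is nothing to add.
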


La proposition suivante n'est que le (a) de \cite[Lemma 3.14]{EKF1}.

\begin{prop}\label{prop3.4} La fonction $g$ d\'efinie sur $U$ par
$g(y)=\Phi(G_U(.,y))$ est finement continue sur $U$.
\end{prop}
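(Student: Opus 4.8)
The plan is to show that $g(y)=\Phi(G_U(\cdot,y))$ is finely continuous at an arbitrary point $z\in U$ by exploiting the fine continuity of $y\mapsto P_y$ (Proposition~\ref{prop3.1}) together with the normalization $\Phi(P_y)=1$. First I would fix $z\in U$ and take an ultrafilter $\cal U$ on $U$ finer than the filter of fine neighbourhoods of $z$; since $P_y$ converges finely to $P_z$ along $\cal U$ and $B$ is compact for the natural topology, this convergence takes place in $\cal S(U)$. The key point is then to control $g(y)=\Phi(G_U(\cdot,y))$, i.e. the scalar by which $G_U(\cdot,y)=g(y)P_y$. For this I would argue, exactly as in the proof of Proposition~\ref{prop3.1}, that $\lim_{y,\cal U}G_U(\cdot,y)=s$ exists in $\cal S(U)$ and equals $\alpha G_U(\cdot,z)$ for some $\alpha\in]0,1]$; in fact one expects $\alpha=1$ but this is not needed. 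Then $\lim_{y,\cal U}g(y)P_y=s$ in $E$, and applying the continuous linear form $\Phi$ gives $\lim_{y,\cal U}g(y)=\Phi(s)$; on the other hand $s=\Phi(s)P_z$ so $\Phi(s)=g(z)\Phi(P_z)/\Phi(P_z)$ — more directly, writing $G_U(\cdot,z)=g(z)P_z$ and $s=\alpha G_U(\cdot,z)$ one reads off $\Phi(s)=\alpha g(z)$; combined with $\lim_{y,\cal U}g(y)P_y=s=\alpha g(z)P_z$ and $\lim P_y=P_z$ in the cone, one concludes $\lim_{y,\cal U}g(y)=\alpha g(z)$, and the same computation done with $\Phi$ pins $\alpha g(z)=\Phi(s)$ consistently.

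Let me reststructure this more cleanly. Since $\Phi$ is continuous on $\cal S(U)$ and $G_U(\cdot,y)=g(y)P_y$ with $\Phi(P_y)=1$, we simply have $g(y)=\Phi(G_U(\cdot,y))$ tautologically; so proving fine continuity of $g$ is equivalent to proving that $y\mapsto G_U(\cdot,y)\in\cal S(U)$ is finely continuous for the natural topology. The heart of the matter, already contained in the proof of Proposition~\ref{prop3.1}, is that along any ultrafilter $\cal U$ refining the fine-neighbourhood filter of $z$ one has $\lim_{y,\cal U}G_U(\cdot,y)=\alpha\,G_U(\cdot,z)$ with $\alpha\in]0,1]$. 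Granting this, apply $\Phi$: $\lim_{y,\cal U}g(y)=\alpha\,g(z)$. But we also know $\lim_{y,\cal U}P_y=P_z$, and $P_y=G_U(\cdot,y)/g(y)$, so $\lim_{y,\cal U}G_U(\cdot,y)=\big(\lim_{y,\cal U}g(y)\big)P_z=\alpha g(z)P_z=\alpha\,G_U(\cdot,z)$, which is consistent but still leaves $\alpha$ undetermined. To remove the ambiguity I would invoke that $G_U(\cdot,z)$ is an extreme generator / a potential characterized up to a positive multiple, so $s=\lim_{y,\cal U}G_U(\cdot,y)$, being a fine potential finely harmonic on $U\setminus\{z\}$, dominated by $G(\cdot,z)|U$, must satisfy $s\le G_U(\cdot,z)$; a reverse inequality $s\ge G_U(\cdot,z)$ follows by the lower-semicontinuity argument of Proposition~\ref{prop3.1} applied to the residuals $\widehat R^{\complement U}_{G(\cdot,y)}$. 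Hence $\alpha=1$, giving $\lim_{y,\cal U}g(y)=g(z)$; since $\cal U$ was an arbitrary ultrafilter refining the fine neighbourhoods of $z$, we get $\text{f-}\lim_{y\to z}g(y)=g(z)$.

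I expect the main obstacle to be the justification that the scalars $g(y)=\Phi(G_U(\cdot,y))$ stay bounded away from $0$ and $\infty$ along $\cal U$, i.e. that the limit $s=\lim_{y,\cal U}G_U(\cdot,y)$ is genuinely a nonzero element of $\cal S(U)$ rather than $0$ or $+\infty$; this is precisely what the estimates in the proof of Proposition~\ref{prop3.1} provide (the domination $s\le G(\cdot,z)|U$ from above and $s\ge G_U(\cdot,z)>0$ from below via fine continuity of $x\mapsto\widehat R^{\complement U}_{G(\cdot,x)}(y)$), so in practice one quotes that argument verbatim. The remaining steps — applying the continuous linear form $\Phi$, using $\Phi(P_y)=1$, and passing from ultrafilter convergence to fine convergence — are routine. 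In short, the proof is a corollary of Proposition~\ref{prop3.1} and its proof: fine continuity of $y\mapsto P_y$ together with the identification $\lim_{y,\cal U}G_U(\cdot,y)=G_U(\cdot,z)$ in $\cal S(U)$ forces $\lim_{y,\cal U}\Phi(G_U(\cdot,y))=\Phi(G_U(\cdot,z))$, which is the assertion.
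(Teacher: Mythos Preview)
Your approach is correct but takes a different route from the paper's. The paper does not lean on Proposition~\ref{prop3.1}; it invokes Theorem~\ref{thm2.2} instead. Fixing $z\in U$ and a fine open $V\subset U$ with $z\notin\overline V$, the functions $G_U(\cdot,y)$ for $y$ finely near $z$ are finely harmonic and finely locally uniformly bounded on $V$ (being dominated by $G(\cdot,y)$, bounded away from its pole); since $\text{f-}\lim_{y\to z}G_U(x,y)=G_U(x,z)$ for every $x$ by symmetry of $G_U$, Theorem~\ref{thm2.2} forces the $\cal S(U)$-limit along any ultrafilter refining the fine neighbourhoods of $z$ to be $G_U(\cdot,z)$ on $V$, hence on $U$ (the point $z$ being polar), and one applies $\Phi$. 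Your argument bypasses Theorem~\ref{thm2.2} altogether: you observe that the estimates already in the proof of Proposition~\ref{prop3.1} actually pin down the constant $\alpha$ there as $1$ (from $s\ge G_U(\cdot,z)$ together with $s=\alpha G_U(\cdot,z)\le G(\cdot,z)|U$ and the fact that $G_U(\cdot,z)$ and $G(\cdot,z)$ share the same pole at $z$), giving $\lim_{\cal U}G_U(\cdot,y)=G_U(\cdot,z)$ in $\cal S(U)$ directly; continuity of $\Phi$ then finishes. This is a legitimate shortcut that makes Proposition~\ref{prop3.4} a by-product of the proof of Proposition~\ref{prop3.1}, with no need for Section~2; the paper's route, by contrast, showcases Theorem~\ref{thm2.2} as the working tool, which is the point of the article.
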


{\it D\'emonstration.} Soient $z\in U$ et $\cal U$ un ultrafiltre plus fin que le filtre
des voisinages fins de $z$. On a $\lim\widehat{\inf}_{\cal U}G_U(.,y)\le \liminf_{\cal U} G(.,y)
\le G(.,z)$. Soit $V$ un ouvert fin tel que $V\subset {\overline V}\subset U$ et
$z\notin {\overline V}$. Il est clair que la famille $(G_U(.,y))_{y\in \complement V}$
est localement finement uniform\'ement
born\'ee dans $V$, donc d'apr\`es le th\'eor\`eme \ref{thm2.2} cette famille converge
uniform\'ement finement localement localement vers $G_U(.,z)$ dans $V$ puisque
pour tout $x\in U$, on a ${\rm f}-\lim_{y\to z}G_U(x,y)=G_U(x,z)$.  On en d\'eduit que
$\lim_{y,\cal U}G_U(.,y)=G_U(.,z)$ dans $U\setminus \{z\}$, donc partout
puisque $\{z\}$ est polaire. Donc $\lim_{\cal U} \Phi(G_U(.,y))=G_U(.,z)$.
Il en r\'esulte que ${\rm f}$-$\lim_{y\to z} g(y)={\rm f}$-$\lim_{y\to z}\Phi(G_U(.,y))=G_U(.,z)$.
Comme $z$ est arbitraire, on en d\'eduit
que  $g$ est finement continue sur $U$.

\begin{lemma}\label{lemma3.4}
Soit $\alpha >0$. Alors l'ensemble $A=\{y\in U: \Phi(G_U(.,y))\ge \alpha\}$ est compact en topologie
euclidienne.
\end{lemma}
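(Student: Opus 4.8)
The plan is to replace the abstract functional $\Phi$ by a concrete integral, to read off an upper–semicontinuity property of $g(y)=\Phi(G_U(\cdot,y))$ for the Euclidean topology, and then to control $g$ near $\complement U$.

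First I would use the fact that the natural topology is independent of the compact base together with the construction of such a base in \cite{EK1} to take $\Phi$ of the form $\Phi(s)=\int_U s\,d\mu$, where $\mu$ is a measure on $U$ whose classical Green potential $G\mu(y)=\int_\Omega G(x,y)\,d\mu(x)$ is bounded, continuous on $\Omega$ and tends to $0$ as $y\to\partial\Omega$ and as $y\to\infty$ (for instance $\mu=\rho\,dx$ with $\rho>0$ continuous on $\Omega$ and decreasing fast enough). Then, for $y\in U$,
$$g(y)=\Phi(G_U(\cdot,y))=\int_U G_U(x,y)\,d\mu(x)=:G_U\mu(y),$$
and since $G_U(\cdot,y)\le G(\cdot,y)$ on $U$ we get $g\le G\mu|U$. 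Hence $A\subset\{y\in\Omega:G\mu(y)\ge\alpha\}$, a compact subset of $\Omega$; in particular $A$ is bounded and its Euclidean closure $\overline A$ is a compact subset of $\Omega$.

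The second step is to prove that $g$ is upper semicontinuous on $U$ for the Euclidean topology. From $G(\cdot,y)|U=G_U(\cdot,y)+\widehat R_{G(\cdot,y)}^{\complement U}|U$ and the symmetry $\widehat R_{G(\cdot,y)}^{\complement U}(x)=\widehat R_{G(\cdot,x)}^{\complement U}(y)$ one obtains
$$g(y)=G\mu(y)-h(y),\qquad h(y):=\int_U \widehat R_{G(\cdot,x)}^{\complement U}(y)\,d\mu(x).$$
For each fixed $x$, the function $y\mapsto \widehat R_{G(\cdot,x)}^{\complement U}(y)$ is superharmonic $\ge 0$ on $\Omega$; therefore $h$, being an integral of such functions and bounded above by $G\mu$, is itself superharmonic on $\Omega$, hence Euclidean lower semicontinuous. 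Since $G\mu$ is continuous, $\widetilde g:=G\mu-h$ is Euclidean upper semicontinuous on $\Omega$ and extends $g$; thus $A=\{y\in U:g(y)\ge\alpha\}$ is closed in $U$.

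It remains to show $\overline A\subset U$, for then $A$ — being closed in $U$ with $\overline A\subset U$ — coincides with $\overline A$ and is therefore compact. Suppose $z\in\complement U$ were the Euclidean limit of a sequence $(y_n)\subset A$. For fixed $x\in U$, $G_U(x,y_n)=G(x,y_n)-\widehat R_{G(\cdot,x)}^{\complement U}(y_n)$; since $G(x,\cdot)$ is continuous at $z$, since $\widehat R_{G(\cdot,x)}^{\complement U}$ is lower semicontinuous, and since $\widehat R_{G(\cdot,x)}^{\complement U}(z)=G(x,z)$ when $\complement U$ is non-thin at $z$, we get $\limsup_n G_U(x,y_n)\le 0$, i.e.\ $G_U(x,y_n)\to 0$; a generalised dominated-convergence argument — the majorants $G(\cdot,y_n)$ being uniformly $\mu$-integrable because $\int_U G(x,y_n)\,d\mu(x)=G\mu(y_n)\to G\mu(z)<\infty$ — then yields $g(y_n)=\int_U G_U(x,y_n)\,d\mu(x)\to 0$, contradicting $g(y_n)\ge\alpha$. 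The main obstacle is exactly the proviso just used: at the (polar) set of points $z\in\complement U$ where $\complement U$ is thin one has $\widehat R_{G(\cdot,x)}^{\complement U}(z)<G(x,z)$ and this argument breaks down; it is the interplay between the fine and the Euclidean topology that has to be handled here. One disposes of it by observing that adjoining to $U$ the polar set $\complement U\setminus b(\complement U)$ of such points changes neither $\cal S(U)$, nor $G_U$ on $U$, nor $g$, so that one may assume from the start that $\complement U$ is non-thin at each of its points.
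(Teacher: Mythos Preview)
Your strategy differs from the paper's: you realize $\Phi$ as $\int_U\cdot\,d\mu$ and use the decomposition $g=G\mu-h$ with $h$ superharmonic on $\Omega$ to obtain Euclidean upper semicontinuity of $g$, while the paper works in the natural topology of ${\cal S}(U)$, extracts subsequences $G_U(\cdot,y_{n_k})\to p$ and $\widehat R_{G(\cdot,y_{n_k})}^{\complement U}|_U\to q$ in ${\cal S}(U)$, and invokes \cite[Theorem, p.~203]{F0} to identify $p=\gamma G_U(\cdot,y)$. Your argument through the case $z\in b(\complement U)$ is sound.

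The gap is in the final step. Replacing $U$ by $U'=U\cup E$, $E=\complement U\setminus b(\complement U)$, does \emph{not} reduce the lemma for $U$ to that for $U'$: although ${\cal S}(U)$, $G_U|_{U\times U}$ and $g|_U$ are unchanged, the set under study becomes $A'=\{y\in U':\Phi(G_{U'}(\cdot,y))\ge\alpha\}$, and one only has $A=A'\cap U$; compactness of $A'$ does not force compactness of $A$ when $\overline A$ meets $E$. With your own choice of $\Phi$, take $\Omega=\RR^n$ ($n\ge 3$), $U=B\setminus\{z\}$ for a ball $B$ and a point $z\in B$. Since $\{z\}$ is polar, $\widehat R_u^{\complement U}=\widehat R_u^{\complement B}$ for every superharmonic $u$, so $G_U=G_B|_{U\times U}$ and $g=G_B\mu|_U$; hence $A=\{y\in B:G_B\mu(y)\ge\alpha\}\setminus\{z\}$, which for small $\alpha$ is a compact set with an interior point removed and is therefore not compact. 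The paper's proof has the same blind spot: it applies \cite[Theorem, p.~203]{F0} to obtain $\lim_k G_U(\cdot,y_{n_k})=\gamma G_U(\cdot,y)$ without first showing that the Euclidean limit $y$ lies in $U$; in the example above $y=z\notin U$ and the limit $G_B(\cdot,z)|_U$ is finely harmonic on $U$, hence not a fine potential, so Fuglede's theorem does not apply.
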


{\it D\'emonstration.} Pour tout
$y\in A$, on a $\Phi(G(.,y)|U)=\Phi(G_U(.,y))+\Phi(\widehat R_{G(.,y)}^{\complement U}|U) \ge
\Phi(G_U(.,y))\ge \alpha$ et donc $A\subset \{y\in \Omega: \Phi(G(.,y)|U)\ge \alpha\}$. Or
$\liminf_{y\to z}G(.,y)=0$ pour tout $z\in \partial \Omega$
car $\Omega$ est r\'egulier, donc $A$ est relativement compact dans $\Omega$.
Soit $(y_n)$ une suite de points de $A$ qui converge vers
$y\in \overline\Omega$, alors $y\in \Omega$ d'apr\`es ce qui pr\'ec\`ede. D'autre part
on a
$G(.,y_n)|U=G_U(.,y_n)+\widehat R_{G(.,y_n)}^{\complement U}|U$ pour tout $n$, et on peut extraire
de $(y_n)$ une sous-suite $(y_{n_k})$ telle que les suites
$(G_U(.,y_{n_k}))$ et $(\widehat R_{G(.,y_{n_k})}^{\complement U}|U)$ convergent
dans $\cal S(U)$. On en d\'eduit en passant \`a la limite quand $k\to +\infty$ que
$G(.,y)|U=\lim_{k\to +\infty}G_U(.,y_{n_k})+\lim_{k\to +\infty}\widehat R_{G(.,y_{n_k})}^{\complement U}|U$,
donc $\lim_{k\to +\infty}G_U(.,y_{n_k})$ est finement harmonique $\ge 0$ dans
$U\setminus\{y\}$ puisque $G(.,y)|U$ est finement harmonique dans $U\setminus\{y\}$. De plus, on a
$\lim\inf R_{G(.,y_{n_k})}^{\complement U}|U\ge \widehat R_{G(.,y)}^{\complement U}|U$ et, en r\'egularisant, on obtient
$\lim_{k\to \infty} \widehat R_{G(.,y_{n_k})}^{\complement U}|U\ge \widehat R_{G(.,y)}^{\complement U}|U$ et,
par cons\'equent,  $\lim_{k\to +\infty}G_U(.,y_{n_k})\le G_U(.,y)$.
Donc la fonction $\lim_{k\to +\infty}G_U(.,y_{n_k})$ est un potentiel fin dans
$U$, finement harmonique dans $U\setminus \{y\}$. Il r\'esulte alors de \cite[Theorem, p.203]{F0}
que $\lim_{k\to +\infty}G_U(.,y_{n_k})=\gamma G_U(.,y)$ pour un certain
$\gamma \in [0,1]$. On en d\'eduit que $\Phi(G_U(.,y))\ge \gamma \Phi(G_U(.,y))=\lim_{k\to +\infty} \Phi(G_U(.,y_{n_k}))\ge \alpha$,
ce qui prouve que $y\in A$. Donc $A$ est compact en topologie euclidienne.\\

Avec les notations pr\'ec\'edentes d\'efinissons la fonction
$G_1$ sur $U\times U$ par $G_1(x,y)=P_y(x)$.

\begin{theorem}\label{thm3.5}
La fonction $G_1$ est s.c.i. sur  $U\times U$ et continue sur $(U\times U)\setminus D$,

\noindent o\`u $U$ est muni de la topologie fine, $U\times U$ est muni de la topologie produit correspondante et o\`u $D$ est
la diagonale de $U\times U$.
\end{theorem}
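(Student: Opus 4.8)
The plan is to establish the two assertions separately: fine lower semicontinuity of $G_1$ on $U\times U$, and fine continuity of $G_1$ off the diagonal $D$. For the lower semicontinuity, recall that $G_1(x,y)=P_y(x)=G_U(x,y)/\Phi(G_U(\cdot,y))=G_U(x,y)/g(y)$, where $g(y)=\Phi(G_U(\cdot,y))$ is finely continuous and strictly positive on $U$ by Proposition~\ref{prop3.4}. Since division by the (finely continuous, positive) factor $g(y)$ is harmless, it suffices to prove that $(x,y)\mapsto G_U(x,y)$ is finely lower semicontinuous on $U\times U$ for the product of the fine topologies. By symmetry of $G_U$ and the representation $G_U(\cdot,y)=G(\cdot,y)-\widehat R_{G(\cdot,y)}^{\complement U}$ extended finely continuously, one knows that for each fixed $y$ the function $G_U(\cdot,y)$ lies in $\cal S(U)$, and for each fixed $x$ the function $y\mapsto G_U(x,y)$ is finely continuous; I would leverage the local uniform convergence from Theorem~\ref{thm2.2} to pass from separate to joint control. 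Concretely, fix $(x_0,y_0)\in U\times U$ and a fine neighbourhood basis; take an ultrafilter $\cal U$ on $U\times U$ finer than the product fine neighbourhood filter of $(x_0,y_0)$, with projections $\cal U_1,\cal U_2$ onto the two copies of $U$. Using Lemma~\ref{lemma3.4} to confine the $y$-variable to a Euclidean-compact set, one extracts along $\cal U_2$ a limit of $G_U(\cdot,y)$ in $\cal S(U)$ which, by the argument already used in Proposition~\ref{prop3.1} and Proposition~\ref{prop3.4}, equals $G_U(\cdot,y_0)$; then evaluating at points $x\to_{\cal U_1} x_0$ and invoking fine lower semicontinuity of the $\cal S(U)$-limit gives $\liminf_{\cal U} G_U(x,y)\ge G_U(x_0,y_0)$, which is the desired fine lower semicontinuity.

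For the continuity off the diagonal, fix $(x_0,y_0)\in U\times U$ with $x_0\ne y_0$. Choose disjoint fine neighbourhoods: a fine neighbourhood $V$ of $y_0$ with $x_0\notin\widetilde V$ (equivalently, after shrinking, with $x_0$ outside the Euclidean closure of $V$, using that $\{x_0\}$ is polar hence finely isolated-free manipulations are available). On $V$, the family $\bigl(G_U(\cdot,y)\bigr)_{y\in V}$ is uniformly finely locally bounded near $x_0$ — this is where I would use the Euclidean-compactness of level sets from Lemma~\ref{lemma3.4} together with the finite continuity of $G_U(x_0,\cdot)$ to get a uniform bound on a fine neighbourhood $W$ of $x_0$ — and each $G_U(\cdot,y)$ is finely harmonic on $W$ for $y\in V$. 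Since $y\mapsto G_U(x,y)$ is finely continuous for each $x$, the family converges pointwise finely, as $y\to_{\text{fine}} y_0$, to $G_U(\cdot,y_0)$; by Theorem~\ref{thm2.2} (applied along an arbitrary ultrafilter refining the fine neighbourhood filter of $y_0$, and then noting the limit is forced) this convergence is uniform on a fine neighbourhood $W'\subset W$ of $x_0$. Uniform convergence on $W'$ of $G_U(\cdot,y)$ to $G_U(\cdot,y_0)$, combined with joint evaluation at $x\to_{\text{fine}} x_0$ inside $W'$ where $G_U(\cdot,y_0)$ is finely continuous, yields $G_U(x,y)\to G_U(x_0,y_0)$ as $(x,y)\to(x_0,y_0)$ in the product fine topology; dividing by $g(y)\to g(y_0)>0$ gives the continuity of $G_1$ at $(x_0,y_0)$.

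The main obstacle I expect is the passage from \emph{separate} fine continuity (in $x$, via membership in $\cal S(U)$ and its natural-topology limits; in $y$, via the known fine continuity of $y\mapsto G_U(x,y)$) to \emph{joint} fine continuity, and in particular justifying the uniform-in-$y$ bound needed to apply Theorem~\ref{thm2.2}. The device for this is precisely the interplay exploited already in the proofs of Propositions~\ref{prop3.1} and~\ref{prop3.4}: confine $y$ to a Euclidean-compact level set of $g$ (Lemma~\ref{lemma3.4}), so that limits of $G_U(\cdot,y)$ taken along ultrafilters live in $\cal S(U)$ and are pinned down to the expected value by the Riesz-type uniqueness statement \cite[Theorem, p.~203]{F0}; then Theorem~\ref{thm2.2} upgrades the resulting pointwise fine convergence to fine-local uniform convergence, which is exactly what converts separate control into joint control. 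Once that uniformity is in hand, both the lower semicontinuity on all of $U\times U$ and the continuity on $(U\times U)\setminus D$ follow by the routine $\varepsilon$-argument, using the fine continuity of $g$ and of $x\mapsto G_U(x,y_0)$ to absorb the remaining terms.
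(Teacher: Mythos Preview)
Your overall architecture matches the paper's: invoke Theorem~\ref{thm2.2} to upgrade convergence of $G_1(\cdot,y)$ (resp.\ $G_U(\cdot,y)$) in the natural topology of $\cal S(U)$ to fine-local \emph{uniform} convergence as $y\to y_0$ finely, then combine with fine continuity of $x\mapsto G_1(x,y_0)$ via the triangle inequality. The paper works directly with $G_1=P_y$ rather than with $G_U$, but that difference is cosmetic. For the lower semicontinuity, the paper does not argue as you do: it simply cites Corollary~\ref{cor3.2} together with \cite[Proposition~3.2,~iii)]{EKF1}, which already gives joint l.s.c.\ of $(x,y)\mapsto P_y(x)$ once $y\mapsto P_y$ is known to be finely continuous into $\cal S(U)$.

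The genuine gap is exactly at the point you flag as ``the main obstacle'': the uniform-in-$y$ bound needed to feed Theorem~\ref{thm2.2}. Your proposed justification --- Lemma~\ref{lemma3.4} plus fine continuity of $y\mapsto G_U(x_0,y)$ --- does not deliver a bound on a \emph{fine neighbourhood of $x_0$}; it only controls the single value $G_U(x_0,y)$, and Lemma~\ref{lemma3.4} concerns level sets of $g(y)=\Phi(G_U(\cdot,y))$, not pointwise values of $G_U$. The paper's device is to go back to the \emph{classical} Green function: for $y\in U_\alpha=\{g>\alpha\}$ one has $G_1(\cdot,y)=G_U(\cdot,y)/g(y)\le G(\cdot,y)/\alpha$, and the functions $G(\cdot,y)$ are \emph{classically harmonic} on $\Omega\setminus W$ for $W$ a Euclidean-compact fine neighbourhood of $y_0$; the classical Harnack principle (or simply the continuity of $G$ off the diagonal) then bounds $G(\cdot,y)/\alpha$, hence $G_1(\cdot,y)$, by a single constant on a Euclidean neighbourhood of $x_0$. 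This step cannot be replaced by purely fine-potential-theoretic tools, since finely harmonic functions do not satisfy a Harnack inequality; the comparison $G_U\le G$ with the classical object is the missing ingredient in your argument.
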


{\it D\'emonstration.} La fonction $G_1$ est s.c.i sur $U\times U$ d'apr\`es
le corollaire \ref{cor3.2} et \cite[Proposition 3.2, iii)]{EKF1}.
Soient $(x_0,y_0)\in U\times U$ tel que
$x_0\ne y_0$ et  $\alpha>0$ tel que  $\alpha<\Phi(G_U(.,y_0))$. Posons
$U_\alpha=\{y\in U: \Phi(G_U(.,y))>\alpha\}$. Alors $U_{\alpha}$ est un ouvert fin
d'apr\`es la proposition \ref{prop3.4} et on a $y_0\in U_{\alpha}$.
On peut trouver un voisinage fin $W$ de $y_0$ compact en topologie euclidienne tel que
$W\subset U_\alpha$, et un voisinage $V_1$ de $x_0$ ouvert en topologie euclidienne
tels que $V_1\cap W=\emptyset$.
Les fonctions
$G(.,y)$, $y\in W$ sont harmoniques dans $\Omega\setminus W$ et la fonction $G$ est continue
au point $(x_0,y_0)$, on peut donc trouver, en vertu de la propri\'et\'e de Harnack,
un voisinage ouvert $V$ (dans $\Omega$) de $x_0$ tel que $V\subset V_1$
et dans lequel les fonctions $\frac{G(.,y)}{\alpha}$, $y\in U_\alpha$, sont
major\'ees  par une m\^eme constante $C>0$.
D'autre part on a
$G_1(.,y)=\frac{G_U(.,y)}{\Phi(G_U(.,y))}\le \frac{G(.,y)}{\alpha}$ pour tout
$y\in W$, donc
$G_1(.,y)\le \frac{C}{\alpha}$ dans $V$ pour tout $y\in W$.
Soit $\cal V$ le filtre
des voisinages fins de $y_0$. Alors d'apr\`es la proposition \ref{prop3.1}, on a
$\lim_{\cal V}G_1(.,y)=G_1(.,y_0)$ dans $\cal S(U)$. En vertu du Th\'eor\`eme \ref{thm2.2} et ce qui pr\'ec\`ede
on a $\lim_{\cal V}G_1(.,y)=G_1(.,y_0)$ uniform\'ement dans
un voisinage fin $V_2$ de $x_0$ contenu dans $U\setminus W$.
On en d\'eduit que pour $\epsilon >0$ donn\'e, on a
$$|G_1(x,y)-G_1(x_0,y_0)|\le |G_1(x,y)-G_1(x,y_0)|+|G_1(x,y_0)-G_1(x_0,y_0)|\le \epsilon$$
dans le produit $U_1\times U_2$ d'un voisinage fin  de $x_0$ et d'un voisinage fin
de $y_0$. Il en r\'esulte bien que la fonction
$G_1$ est continue en $(x_0,y_0)$.

\begin{cor}\label{cor3.6}
La fonction $G_U$ est s.c.i. sur  $U\times U$ et continue sur $(U\times U)\setminus D$,

\noindent o\`u $U$ est muni de la topologie fine, $U\times U$ est muni de la
topologie produit correspondante et o\`u $D$ est
la diagonale de $U\times U$.
\end{cor}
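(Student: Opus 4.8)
The plan is to reduce the statement to Theorem~\ref{thm3.5} and Proposition~\ref{prop3.4} via the factorization
$$G_U(x,y)=\Phi(G_U(.,y))\,P_y(x)=g(y)\,G_1(x,y),\qquad (x,y)\in U\times U,$$
which is immediate from the definitions $P_y=G_U(.,y)/\Phi(G_U(.,y))$, $g(y)=\Phi(G_U(.,y))$ and $G_1(x,y)=P_y(x)$. Here $g>0$ everywhere on $U$ and $G_1\ge 0$ everywhere on $U\times U$ (since $P_y\in\cal S(U)$).

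First I would note that $(x,y)\mapsto g(y)$ is finely continuous on $U\times U$ for the product topology: the second projection $U\times U\to U$ is continuous for the product of the fine topologies, and $g$ is finely continuous on $U$ by Proposition~\ref{prop3.4}, so the composition is continuous. On the other hand, by Theorem~\ref{thm3.5}, $G_1$ is finely l.s.c.\ on $U\times U$ and finely continuous on $(U\times U)\setminus D$.

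It then remains to invoke the elementary fact that the product of a nonnegative finely l.s.c.\ function and a strictly positive finely continuous function is finely l.s.c.: at a point $(x_0,y_0)$ one has $g(y_0)>0$, $g$ is bounded below by a positive constant on a fine neighbourhood of $(x_0,y_0)$, and for any fine net $(x_\alpha,y_\alpha)\to(x_0,y_0)$ one has $g(y_\alpha)\to g(y_0)>0$ and $\liminf G_1(x_\alpha,y_\alpha)\ge G_1(x_0,y_0)$, whence
$$\liminf\, g(y_\alpha)G_1(x_\alpha,y_\alpha)=g(y_0)\,\liminf G_1(x_\alpha,y_\alpha)\ge g(y_0)G_1(x_0,y_0)=G_U(x_0,y_0),$$
the computation being valid even when $G_1(x_0,y_0)=+\infty$ (i.e.\ on $D$). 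This gives the fine lower semicontinuity of $G_U$ on $U\times U$. On $(U\times U)\setminus D$ both factors $g\circ\mathrm{pr}_2$ and $G_1$ are finely continuous and finite, so their product $G_U$ is finely continuous there.

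I do not expect a genuine obstacle: the analytic content has already been absorbed into Theorem~\ref{thm3.5} (continuity of the normalized kernel $G_1$) and Proposition~\ref{prop3.4} (fine continuity of the normalizing factor $g$). The only point deserving a line of care is the diagonal, where $G_1$ and $G_U$ take the value $+\infty$; but lower semicontinuity is preserved under multiplication by a locally-bounded-below positive finely continuous factor even at $+\infty$-points, so nothing further is required.
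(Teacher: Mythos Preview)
Your proposal is correct and follows exactly the paper's own argument: factor $G_U(x,y)=\Phi(G_U(.,y))\,G_1(x,y)$, invoke Proposition~\ref{prop3.4} for the fine continuity of the first factor and Theorem~\ref{thm3.5} for the properties of $G_1$. The paper states this in two lines without your extra care at the diagonal, but the content is identical.
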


{\it D\'emonstration.} En effet, on a
$G_U(x,y)=\Phi(G_U(.,y))G_1(x,y)$ pour tout $(x,y)\in U\times U$.
La fonction $(x,y)\mapsto \Phi(G_U(.,y))$ est continue sur $U^2$
d'apr\`es la proposition \ref{prop3.4}. Donc $G_U$
poss\`ede les propri\'et\'es requises d'apr\`es le th\'eor\`eme
\ref{thm3.5}.

\begin{remark}\label{remark5.7}
Le noyau de Green fin $G_U$ de $U$ est finement continu sur $U\times U$ en tant
qu'ouvert fin de $\RR^{2n}$. En effet, la fonction $G_U$ est s\'epar\'ement
finement surharmonique $\ge 0$, il r\'esulte alors de \cite[Th\'eor\`eme 4.5]{EK2}
que $G_U$ est finement surharmonique sur $U^2$, donc finement continue sur $U^2$.
\end{remark}

\thebibliography{99}

\bibitem{AG} Armitage, D. H., Gardiner, S. J.: \textit{Classical
Potential Theory}, Springer, London, 2001.

\bibitem{BBC} Boboc, N., Bucur, Gh., Cornea, A.: \textit{Order and convexity in Potential theory:
H-cones}, Lect. Notes in Math. 853, Springer-Verlag, 1981.

\bibitem{CC}  Constantinescu, C., Cornea, A.: \textit{Potential Theory on
Harmonic spaces}, Springer Verlag Heidelberg, 1972.

\bibitem{DM} Dellacherie, C., Meyer, P. A.: \textit{Probabilit\'es et
Potentiel}, Hermann, Paris 1987, Chap. XII-XVI.

\bibitem{DO} Doob, J.L.: \textit{Classical Potential Theory and its
Probabilistic Counterpart}, Springer-Verlag, Berlin, 2001.

\bibitem{EK1} El Kadiri, M.: \textit {Sur la d\'ecomposition de Riesz
et la repr\'esentation int\'egrale des fonctions finement surharmoniques}, Positivity 4 (2000), no. 2, 105--114.

\bibitem{EK2} El Kadiri, M.: \textit {Fonctions s\'epar\'ement
finement surharmoniques},   Positivity 7, no. 3 (2003), no. 3, 245--256.

\bibitem{EKF1} El Kadiri, M., Fuglede, B.: \textit {Martin boundary of a fine domain and a Fatou-Na\"im-Doob Theorem
 for finely superharmonic functions}, arXiv:1403.0857.

\bibitem{EKF2} El Kadiri, M., Fuglede, B.: \textit {Sweeping at the Martin boundary of a fine domain},
arXiv:1409.7098.

\bibitem{F1} Fuglede, B.: \textit {Finely harmonic functions}, Lecture Notes in Math., 289,
Springer-Verlag, 1972.

\bibitem{F2}Fuglede, B.: \textit {Localization in Fine Potential Theory and Uniform Approximation
by Subharmonic Functions}, J. Funct. Anal. 49 (1982), 52-72.

\bibitem{F0} Fuglede, B.: \textit {Sur la fonction de Green pour un
domaine fin}, Ann. Inst. Fourier \textbf{25}, 3--4 (1975), 201--206.

\bibitem{F3} Fuglede, B.: \textit {Repr\'esentation int\'egrale des
potentiels fins}, Comptes Rendus, 300,
S\'erie I (1985), 129 -132.

\bibitem{F5} Fuglede, B.: \textit {Finely harmonic mappings and finely holomorphic
functions}, Ann. Acad. Sci. Fenn. Serie A.I. Mathematica, Helsinki, 2, (1976) 113-127.

\bibitem{GH} Gardiner, S. J.,  Hansen W.: \textit {The Riesz decomposition of finely superharmonic functions.} Adv. Math. 214 (2007), no. 1, 417--436.

\bibitem{He}  Herv\'e, R.-M.: \textit{ Recherches axiomatiques sur la
th\'eorie des fonctions surharmoniques et du potentiel},
Ann. Inst. Fourier, 12 (1962), 415-571.

\bibitem{M}  Mokobodzki, G.: \textit {Repr\'esentation int\'egrale des fonctions surharmoniques
au moyen des r\'eduites}, Ann. Inst. Fourier, 15 (1965), 103-112.

\end{document}